\theoremstyle{definition}
\newtheorem{theorem}{Theorem}[section]
\numberwithin{equation}{section}
\newtheorem*{theorema}{Main theorem}
\newtheorem*{theorem*}{Theorem}
\newtheorem{lemma}[theorem]{Lemma}
\newtheorem{proposition}[theorem]{Proposition}
\newtheorem*{claim*}{Claim}
\theoremstyle{definition}
\newtheorem{definition}[theorem]{Definition}
\newtheorem{remark}[theorem]{Remark}
\newcommand{\e}{\operatorname{e}}
\newtheoremstyle{TheoremNum}
        {8pt}{8pt}              
        {\upshape}                      
        {}                              
        {\bfseries}                     
        {.}                             
        {.5em}                             
        {\thmname{#1}\thmnote{ \bfseries #3}}
  \theoremstyle{TheoremNum}
\newcommand{\m}{\mathfrak{m}}
\newcommand{\ZZ}{\mathbb{Z}}
\newcommand{\IN}{\operatorname{in}}
\newcommand{\Spec}{\operatorname{Spec}}
\newcommand{\Ext}{\operatorname{Ext}}
\newcommand{\Supp}{\operatorname{Supp}}
\newcommand{\Ass}{\operatorname{Ass}}
\newcommand{\IM}{\operatorname{Im}}	
\newcommand{\depth}{\operatorname{depth}}
\newcommand{\HF}{\operatorname{HF}}
\newcommand{\HS}{\operatorname{HS}}
\newcommand{\p}{\mathfrak{p}}
\newcommand{\MIN}{\operatorname{Min}}
\renewcommand{\leq}{\leqslant}
\renewcommand{\geq}{\geqslant}
\newcommand{\gin}{\operatorname{gin}}
\newcommand{\sat}{\operatorname{{sat}}}
\newcommand{\adeg}{\operatorname{adeg}}
\title{A criterion for sequential Cohen-Macaulayness}
\author{Giulio Caviglia}
\address{Department of Mathematics, Purdue University, 150 N. University Street, West Lafayette, IN 47907-2067, USA}
\email{gcavigli@purdue.edu}
\author{Alessandro De Stefani}
\address{Dipartimento di Matematica, Universit{\`a} di Genova, Via Dodecaneso 35, 16146 Genova, Italy}
\email{destefani@dima.unige.it}
\thanks{The first author was partially supported by a grant from the Simons Foundation (41000748, G.C.)}
\thanks{The second author was partially supported by the PRIN 2020 project 2020355B8Y ``Squarefree Gr{\"o}bner degenerations, special varieties and related topics''.}
\subjclass[2010]{}
\keywords{}
\begin{document}

\begin{abstract}
The purpose of this note is to show that a finitely generated graded module $M$ over $S=k[x_1,\ldots,x_n]$, $k$ a field, is sequentially Cohen-Macaulay if and only if its arithmetic degree $\adeg(M)$ agrees with $\adeg(F/\gin_{revlex}(U))$, where $F$ is a graded free $S$-module and $M \cong F/U$. This answers positively a conjecture of Lu and Yu from 2016.
\end{abstract} 

\maketitle

\section{Introduction}
Sequentially Cohen-Macaulay modules were first introduced in the graded setting by Stanley \cite{Stanley}, in connection with the theory of Stanley-Reisner rings. Later, Schenzel \cite{Schenzel} introduced the notion of Cohen-Macaulay filtered module in a more general setting, and showed that in the graded case it coincides with the one introduced by Stanley.

Let $k$ be a field, and $S=k[x_1,\ldots,x_n]$ with the standard grading. Let $M$ be a finitely generated $\ZZ$-graded $S$-module of Krull dimension $d$. A module $M$ is sequentially Cohen-Macaulay if one of the following three equivalent conditions is satisfied:
\begin{enumerate}
\item (Stanley) There exists a filtration $0 = M_0 \subsetneq M_1 \subsetneq M_2 \subsetneq \ldots \subsetneq M_t = M$ with $M_i/M_{i-1}$ Cohen-Macaulay graded $S$-modules of dimension $d_i$ satisfying $d_i>d_{i-1}$ for all $i=1,\ldots,t$.
\item (Schenzel) If $\delta_i(M)$ denotes the largest submodule of $M$ of dimension less than or equal to $i$, then the filtration 
\[
0 \subseteq \delta_0(M) \subseteq \delta_1(M) \subseteq \ldots \subseteq \delta_{d}(M) = M
\]
is such that $\delta_i(M)/\delta_{i-1}(M)$ is either zero or Cohen-Macaulay for all $i=1,\ldots,d$.
\item (Peskine) For every $i=0,\ldots,n$ the module $\Ext^{n-i}_S(M,S)$ is either zero or Cohen-Macaulay of dimension $i$.
\end{enumerate}
Over the years sequentially Cohen-Macaulay modules have been studied extensively, even from very different perspectives (for instance, see \cite{Duval,HerzogHibi,HerzogReinerWelker,ABG,Goodarzi,Josep,CuongCuong,CT}). We invite the interested reader to consult \cite{sCMSurvey} for further details.

For the purposes of this note, it is important to mention a result of Herzog and Sbarra, which characterizes sequentially Cohen-Macaulay modules. We state it here in a dual version:
\begin{theorem}\cite[Main theorem]{HerzogSbarra} \label{HS}
Let $F$ be a finitely generated free $S$-module, and $U \subseteq F$ be a homogeneous submodule. The module $F/U$ is sequentially Cohen-Macaulay if and only if $\HF(\Ext^{n-i}_S(F/U,S)) = \HF(\Ext^{n-i}_S(F/\gin_{revlex}(U),S))$ for all $i=0,\ldots,n$, where $\HF(-)$ denotes the Hilbert function of a graded $S$-module.
\end{theorem}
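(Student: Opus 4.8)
The plan is to first dualize and then split the statement into an inequality that always holds and a rigidity statement that detects sequential Cohen-Macaulayness. By graded local duality over $S$ (with canonical module $\omega_S = S(-n)$), one has $\HH^i_{\m}(M)^{\vee} \cong \Ext^{n-i}_S(M,S)(-n)$, where $(-)^{\vee}$ denotes graded Matlis dual; hence
\[
\dim_k \HH^i_{\m}(M)_j = \dim_k \Ext^{n-i}_S(M,S)_{-n-j}
\]
for all $i,j$. Thus the $\Ext$ Hilbert functions agree for $M = F/U$ and $F/\gin_{\mathrm{revlex}}(U)$ if and only if the Hilbert functions of all the local cohomology modules $\HH^i_{\m}(-)$ agree, and I would work throughout with this reformulated, local-cohomology version.

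First I would reduce to a monomial degeneration. A generic $g \in \mathrm{GL}_n(k)$ acts as a graded automorphism of $S$ and of $F$, so it induces graded isomorphisms on local cohomology and leaves the relevant Hilbert functions unchanged; moreover $\gin_{\mathrm{revlex}}(U) = \In_{\mathrm{revlex}}(gU)$ for generic $g$. Replacing $U$ by $gU$, I consider the Gröbner degeneration of $gU$ to its revlex initial module inside a flat family over $\mathbb{A}^1$. Flatness keeps the Hilbert function of the module constant, while the standard semicontinuity of local cohomology under such a degeneration gives the one-sided bound
\[
\dim_k \HH^i_{\m}(F/U)_j \;\le\; \dim_k \HH^i_{\m}(F/\gin_{\mathrm{revlex}}(U))_j
\]
for all $i,j$. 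So the nontrivial content is that equality holds for every $i$ precisely when $F/U$ is sequentially Cohen-Macaulay.

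Next I would frame the rigidity with two facts. First, $\gin_{\mathrm{revlex}}(U)$ is a Borel-fixed (strongly stable) monomial submodule, and such monomial quotients are sequentially Cohen-Macaulay, so the target module has a known good structure. Second, the Bayer--Stillman theorem shows that the revlex generic initial module preserves depth and, more precisely, is compatible with Schenzel's dimension filtration; consequently, for a sequentially Cohen-Macaulay $M$ the Hilbert functions of the CM quotients $\delta_i(M)/\delta_{i-1}(M)$, which determine the Hilbert functions of all the $\HH^i_{\m}(M)$, are unchanged by the degeneration. This yields the implication ``$M$ sequentially Cohen-Macaulay $\Rightarrow$ equality.''

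The hard part is the converse, where I must pass from a numerical coincidence of local cohomology Hilbert functions back to the structural criterion. I would induct on $\dim M$ using a generic linear form $\ell$ (the smallest variable in the revlex order), via the long exact sequence in local cohomology induced by multiplication by $\ell$ (exact up to finite length, as $\ell$ is filter-regular), relating $\HH^i_{\m}(M)$, $\HH^i_{\m}(M(-1))$, and $\HH^i_{\m}(M/\ell M)$. The equality of Hilbert functions propagates to $M/\ell M$, whose revlex gin is the hyperplane section of $\gin_{\mathrm{revlex}}(U)$, so by induction $M/\ell M$ is sequentially Cohen-Macaulay, and one lifts this back to $M$. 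Concretely, I would invoke Peskine's criterion (condition~(3) of the Introduction): the equality forces each $\Ext^{n-i}_S(M,S)$ to share the Hilbert function of the Cohen-Macaulay-or-zero module $\Ext^{n-i}_S(F/\gin_{\mathrm{revlex}}(U),S)$, and the long exact sequence together with the inequality already established in lower dimension forces $\Ext^{n-i}_S(M,S)$ itself to be zero or Cohen-Macaulay of dimension $i$. The main obstacle is exactly this last step: showing that any failure of the Cohen-Macaulay property of some $\Ext^{n-i}_S(M,S)$ necessarily produces a strict jump in at least one local cohomology Hilbert function under the degeneration, so that the assumed equality rules it out.
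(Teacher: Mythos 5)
Your outline reproduces the broad skeleton of the paper's argument (the paper proves this statement, in the greater generality of Theorem \ref{main}, as the equivalence of its conditions (1) and (2)): dualize to local cohomology, obtain the one-sided inequality by semicontinuity, use that the revlex degeneration is itself sequentially Cohen-Macaulay, and induct on dimension by cutting with the last variable. But at the two points where the actual work happens, the proposal has genuine gaps. In the forward direction, the assertion that ``the Bayer--Stillman theorem shows that the revlex generic initial module \ldots is compatible with Schenzel's dimension filtration'' is not a citable theorem, and you give no argument that the Hilbert functions of the quotients $\delta_i(M)/\delta_{i-1}(M)$, hence of the $H^i_\m(M)$, survive the degeneration. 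The paper proves (1)$\Rightarrow$(2) instead by a dimension induction: sequential Cohen-Macaulayness plus $\Ass_r(F/U)=\Ass_r(\Ext^{n-r}_S(F/U,S))$ (Remark \ref{remark Ass}) makes $x_n$ regular on every $\Ext^{n-r}_S(F/U,S)$, which yields short exact sequences $0 \to \Ext^{n-r}_S(F/U,S) \to \Ext^{n-r}_S(F/U,S)[+1] \to \Ext^{n-r+1}_S(F/(U+x_nF),S) \to 0$ that are then matched against the analogous sequences for the initial module coming from \cite{CDS}.

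In the converse direction there are two unproven steps, and the second is the crux of the whole theorem. First, equality of the local cohomology Hilbert functions for $M$ does not obviously pass to $M/\ell M$: the long exact sequence expresses $\HF(H^i_\m(M/\ell M))$ in terms of the \emph{ranks} of multiplication by $\ell$ on $H^i_\m(M)$ and $H^{i+1}_\m(M)$, which are not determined by the Hilbert functions when $M$ is not yet known to be sequentially Cohen-Macaulay, and semicontinuity gives only an inequality for the section. The paper sidesteps this by downgrading from Hilbert functions to multiplicities (arithmetic degrees) and by interposing the partial initial module $W=\IN_\omega(U)$, $\omega=(0,\ldots,0,-1)$: the squeeze $\adeg_r(F/U)\leq\adeg_r(F/W)\leq\adeg_r(F/\gin_{revlex}(U))$ forces equality at $W$, and the exact sequences of \cite{CDS}, which hold for $W$ \emph{unconditionally}, transport this equality to $F/(U+x_nF)$ so that induction applies. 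Second, you explicitly leave open the lift from ``$M/\ell M$ sequentially Cohen-Macaulay plus numerical equality'' back to $M$; this is exactly what the paper's Lemma \ref{multiplicity} supplies (a Sally-machine rigidity: if $M/\ell M$ is Cohen-Macaulay of dimension $d-1$ and $\e_{d-1}(M/\ell M)=\e_d(M)$, then $\ell$ is $M$-regular and $M$ is Cohen-Macaulay), applied through the isomorphism $\Ext^{n-r+1}_S(F/(U+x_nF),S)\cong \Ext^{n-r}_S(F/U,S)\otimes_S S/(x_n)$ in an ascending induction on $r$ that simultaneously proves $x_n$-regularity of each Ext module. Without an analogue of this rigidity lemma, the numerical hypothesis cannot be converted into the structural conclusion, so the proposal as written does not close.
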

This theorem has been extended in two different directions: weaker versions of sequential Cohen-Macaulayness can be characterized either by showing that equality of Hilbert functions holds only for certain cohomological indices \cite{SbarraStrazzanti}, or by replacing revlex with certain partial revlex orders \cite{CDS}.

To state our main theorem, we recall the notion of arithmetic degree, introduced by Bayer and Mumford in \cite{BM} (see also \cite{STV,Vasconcelos}). First, recall that if $0\ne M$ is a finitely generated $\ZZ$-graded module of dimension $d$, and $\HF(M;j) = \dim_k(M_j)$ is the Hilbert function of $M$ in degree $j$, then for $j \gg 0$ we have
\[ 
\sum_{i \leq j} \HF(M;i) = \frac{\e(M)}{d!} j^{d} +  O(j^{d-1})
\]
for some $\e(M) \in \ZZ_{>0}$, called multiplicity (or degree) of $M$. Given an integer $r \geq d$, we let 
\[
\e_r(M) = \begin{cases} \e(M) & \text{ if } r=d \\ 0 & \text{ otherwise. }
\end{cases}
\]


\begin{definition} Let $S=k[x_1,\ldots,x_n]$, and $M$ be a finitely generated graded $S$-module. For all $r=0,\ldots,n$ we let $\adeg_r(M) = \e_r(\Ext^{n-r}_S(M,S))$. The arithmetic degree of $M$ is defined as $\adeg(M) = \sum_{r=0}^n \adeg_r(M)$.
\end{definition}
Recall that $\dim(\Ext_S^{n-r}(M,S)) \leq r$ always holds (see for instance \cite[Section 3.1]{Schenzel_Dual}), therefore the above definition makes sense. Now let $F$ be a free $S$-module, and $U \subseteq F$ be a graded submodule. Given any weight $\omega \in \ZZ^n$, by upper semi-continuity one has $\adeg_r(F/U) \leq \adeg_r(F/\IN_\omega(U))$ for all $r=0,\ldots,n$ (see \cite{STV} for the case of cyclic modules); in particular, $\adeg(F/U) \leq \adeg(F/\gin_{revlex}(U))$ always holds. If $F/U$ is sequentially Cohen-Macaulay, then Theorem \ref{HS} immediately gives that $\adeg(F/U) = \adeg(F/\gin_{revlex}(U))$. This result was observed by Lu and Yu \cite[Proposition 3.6]{LY}, and in the same paper they conjecture that the converse holds as well. The purpose of this note is to prove their conjecture:
\begin{theorema} Let $S=k[x_1,\ldots,x_n]$, with the standard grading. Let $F$ be a finitely generated graded free $S$-module and $U \subseteq F$ be a homogeneous submodule. We have that $F/U$ is sequentially Cohen-Macaulay if and only if $\adeg(F/U) = \adeg(F/\gin_{revlex}(U))$.
\end{theorema}
Actually, we prove their conjecture in a bigger generality, by showing that it is not necessary to take general coordinates when computing initial ideals as long as $x_n,\ldots,x_1$ is a filter regular sequence for $F/U$ (see Section \ref{section main}). As a consequence of our proof, we also obtain an analogous generalization of Theorem \ref{HS}.

\section{Preliminaries and main result} \label{section main}
Let $k$ be a field, and $S=k[x_1,\ldots,x_n]$ be a polynomial ring with standard grading $\deg(x_i)=1$ for all $i$. Let $\m=(x_1,\ldots,x_n)$. Throughout, $M$ will always denote a finitely generated $\ZZ$-graded $S$-module.

Consider the weight $\omega=(0,\ldots,0,-1) \in \ZZ^n$ and, for a homogeneous submodule $U$ of a graded free $S$-module $F$ we let $\IN(U):= \IN_\omega(U) \subseteq F$ be the initial submodule. Since revlex can be obtained as $\IN_\Omega$ for the following matrix of weights
\[
\Omega = \begin{bmatrix} 0 & 0 & \ldots & 0& -1 \\
0 & & 0 \ldots & -1 & -1 \\
\vdots & \vdots & \vdots & \vdots & \vdots \\
0 & -1 & \ldots & -1 & -1 
\end{bmatrix},
\]
of which $\omega$ is the first row, we call $\IN(U)$ a ``partial revlex submodule''. See \cite{CDS} for more details on this construction, where $\IN(-)$ is denoted as $\IN_{rev_{1}}(-)$.

\begin{definition} Let $M$ be a finitely generated $\ZZ$-graded $S$-module. A homogeneous element $f \in S$ is called filter regular if $0:_M f$ has finite length. A sequence of elements $f_1,\ldots,f_t$ is called a filter regular sequence if $f_{i+1}$ is filter regular for $M/(f_1,\ldots,f_i)M$ for all $i<t$.
\end{definition}

\begin{remark} \label{remark partial} Let $U$ be a homogeneous submodule of a graded free $S$-module $F$. 
Since $\IN(U)$ is a partial deformation towards $\IN_{revlex}(U)$, by upper semi-continuity one has that $\adeg_r(F/U) \leq \adeg_r(F/\IN(U)) \leq \adeg_r(F/\IN_{revlex}(U))$ for all $r=0,\ldots,n$.
\end{remark}

We will need the following lemma in the proof of the main theorem. While such a result might be well-known in the literature, we include a proof for sake of completeness.

\begin{lemma} \label{multiplicity}
Let $M$ be a finitely generated $\ZZ$-graded $S$-module of dimension $d>0$, and let $\ell \in S_1$ be such that $M/\ell M$ is Cohen-Macaulay of dimension $d-1$, and $\e_{d-1}(M/\ell M) = \e_d(M)$. Then $\ell$ is an $M$-regular element, and $M$ is Cohen-Macaulay.
\end{lemma}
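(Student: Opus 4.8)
The plan is to show that the $\ell$-power torsion submodule $L:=\bigcup_{k}(0:_M\ell^k)$ is zero and that $M^\ast:=M/L$ is Cohen-Macaulay; since $\ell$ is a nonzerodivisor on $M^\ast$ by construction, the vanishing $L=0$ yields both conclusions at once (namely $\ell$ is $M$-regular and $M=M^\ast$ is Cohen-Macaulay).

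First I would extract the numerical input from the exact sequence
\[
0 \to (0:_M \ell)(-1) \to M(-1) \xrightarrow{\ell} M \to M/\ell M \to 0 .
\]
Writing Hilbert series as rational functions with a power of $(1-t)$ in the denominator, this gives $\HS(M/\ell M;t)=(1-t)\HS(M;t)+t\,\HS(0:_M\ell;t)$. Comparing pole orders at $t=1$ (the left side has order $d-1$) first forces $\dim(0:_M\ell)\le d-1$; then, clearing denominators to $(1-t)^{d-1}$ and evaluating the numerators at $t=1$, I obtain
\[
\e_{d-1}(M/\ell M)=\e_d(M)+\e_{d-1}(0:_M\ell).
\]
The hypothesis $\e_{d-1}(M/\ell M)=\e_d(M)$ then gives $\e_{d-1}(0:_M\ell)=0$, i.e.\ $\dim(0:_M\ell)\le d-2$. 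Since $0:_M\ell$ and $L$ have the same support (any element annihilated by a power of $\ell$ produces, after multiplying by a suitable power, a nonzero element annihilated by $\ell$), also $\dim L\le d-2$.

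Next I would analyze $M^\ast=M/L$. The natural surjection $M/\ell M \twoheadrightarrow M^\ast/\ell M^\ast = M/(\ell M+L)$ has kernel $L/(L\cap \ell M)$, of dimension $\le d-2$. But $M/\ell M$ is Cohen-Macaulay, hence unmixed of pure dimension $d-1$, so it has no nonzero submodule of smaller dimension; therefore this kernel is zero. This simultaneously produces the isomorphism $M^\ast/\ell M^\ast\cong M/\ell M$ (Cohen-Macaulay of dimension $d-1$) and the containment $L\subseteq \ell M$. Since $\ell$ is regular on $M^\ast$ and $M^\ast/\ell M^\ast$ is Cohen-Macaulay of dimension $d-1$, it follows that $M^\ast$ is Cohen-Macaulay of dimension $d$.

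It remains to prove $L=0$, which I regard as the crux. Here I would exploit $L\subseteq \ell M$ together with the fact that $\ell$ acts locally nilpotently on $L$: if $L\ne 0$, choose a nonzero homogeneous $x\in L$ of minimal degree; write $x=\ell y$ with $y$ homogeneous of strictly smaller degree, and from $\ell^k x=0$ deduce $\ell^{k+1}y=0$, so $y$ is a nonzero element of $L$ of smaller degree, a contradiction. Hence $L=0$, so $\ell$ is $M$-regular and $M=M^\ast$ is Cohen-Macaulay. The main obstacle is exactly this final upgrade from "$0:_M\ell$ has small dimension" to "$L=0$": the dimension bound by itself is insufficient (a direct sum of a Cohen-Macaulay module and a lower-dimensional $\ell$-torsion module would satisfy it), and it is the extra rigidity coming from the unmixedness of the Cohen-Macaulay module $M/\ell M$—recorded in the containment $L\subseteq \ell M$—that forces the torsion to vanish.
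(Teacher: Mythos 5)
Your proof is correct, and it takes a genuinely different route from the paper's. The paper first reduces to $k$ infinite, cuts $M$ by $d-1$ generic linear forms (filter regular for $M$, a regular sequence for $M/\ell M$) to reach a one-dimensional module $N$ with $\e_1(N)=\e_d(M)=\lambda(N/\ell N)$, splits $N \cong k[\ell]^{\oplus \e_1(N)} \oplus T$ by the structure theorem over the PID $k[\ell]$ to conclude $T=0$, and then climbs back up to $M$ by repeated use of a version of Sally's machine, proved via a local cohomology computation for filter regular elements. You instead stay with $M$ itself: the Hilbert series identity $\HS(M/\ell M;t)=(1-t)\HS(M;t)+t\,\HS(0:_M\ell;t)$ plus the multiplicity hypothesis forces $\dim(0:_M\ell)\le d-2$, hence $\dim L\le d-2$ for the $\ell$-power torsion $L$; unmixedness of the Cohen-Macaulay module $M/\ell M$ then kills the submodule $L/(L\cap\ell M)$, giving $L\subseteq\ell M$; and the minimal-degree (graded Nakayama) argument finishes, since $L\subseteq\ell M$ together with $\ell$-torsion and the boundedness below of degrees forces $L=0$. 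Both arguments hinge on the same mechanism---the multiplicity equality makes the $\ell$-torsion too small to survive---but your handling of the final step is more elementary and self-contained: no infinite-field reduction, no generic filter regular sequence, no local cohomology, no PID structure theorem, and it works over any field as stated. What the paper's route buys is a reusable auxiliary statement (the Sally-machine claim, which is then also available for the inductive bookkeeping in the main theorem) and a core argument that adapts to local settings, whereas your minimal-degree step genuinely uses the grading.
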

\begin{proof}
Without loss of generality we may assume that $k$ is infinite. Then, we can find linear forms $\ell_1,\ldots,\ell_{d-1}$ which form a filter regular sequence for $M$ and which form a full regular sequence for $M/\ell M$. If we consider the one-dimensional module $N=M/(\ell_1,\ldots,\ell_{d-1})M$, the first condition guarantees that $\e_d(M) = \e_1(N)$. Since $M/(\ell,\ell_1,\ldots,\ell_{d-1})M = N/\ell N$ is zero-dimensional, we have that $\ell$ is a parameter for $N$. Using that $M/\ell M$ is Cohen-Macaulay, we conclude that $\e_1(N) = \e_d(M) = \e_{d-1}(M/\ell M) = \lambda(N/\ell N)$, where $\lambda(-)$ denotes the length of a module. Since $N$ is a finitely generated $k[\ell]$-module, by the structure theorem for modules over a PID we can write $N \cong k[\ell]^{\oplus \e_1(N)} \oplus T$. It follows that $\lambda(N/\ell N) = \e_1(N) + \lambda(T/\ell T)$, and therefore $T/\ell T = 0$. We conclude that $T  = 0$ by Nakayama's Lemma.

So far we have shown that $\ell$ is $N$-regular and, in particular, $N$ is a one-dimensional Cohen-Macaulay module. If $d>1$, to conclude we repeatedly use the following version of Sally's machine:
\begin{claim*} Let $B$ be a finitely generated $\ZZ$-graded $S$-module of dimension $d>1$, and let $x \in S_1$ be a filter regular element for $B$. If $\depth(B/x B)>0$, then $x$ is $B$-regular. In particular, $\depth(B) = \depth(B/xB)+1$.
\end{claim*}
{\it Proof of the Claim.} First, the short exact sequence $0 \to B/(0:_B x)[-1] \to B \to B/xB \to 0$ yields an exact sequence $0 \to H^0_\m(B/(0:_B x))[-1] \stackrel{\varphi}{\to} H^0_\m(B) \to H^0_\m(B/xB)$. Since $x$ is filter regular for $B$ we have
\[
H^0_\m\left(\frac{B}{0:_B x}\right) = \frac{(0:_B x):x^\infty}{0:_B x}  = \frac{0:_Bx^\infty}{0:_Bx} = \frac{H^0_\m(B)}{(0:_Bx)},
\]
and it follows that $\IM(\varphi) = x H^0_\m(B)$. By assumption we have that $H^0_\m(B/xB)=0$, and therefore we obtain that $xH^0_\m(B) = H^0_\m(B)$. We then conclude by Nakayama's lemma that $H^0_\m(B)=0$, and since $x$ is filter regular, it is $B$-regular. In particular, it follows that $\depth(B) = \depth(B/xB)+1$. 
\end{proof}

For a module $M$ we let $\Ass_r(M) = \Ass(M) \cap \{\p \in \Spec(S) \mid \dim(S/\p) = r\}$. 
\begin{remark} \label{remark Ass} We recall that, if $M$ is finitely generated and $\ZZ$-graded, then $\Ass_r(M) = \Ass_r(Ext^{n-r}_S(M,S))$ for all $r = 0,\ldots,n$. In fact, this follows by noticing that $\p \in \Ass_r(M)$ if and only if $H^0_{\p S_\p}(M_\p) \ne 0$, if and only if $\left(\Ext^{n-r}_S(M,S)\right)_\p \ne 0$, if and only if $\p \in \MIN(\Supp(\Ext^{n-r}_S(M,S)))$, if and only if $\p \in \Ass_r(\Ext^{n-r}_S(M,S))$ (see also \cite[Proposition A.3]{Faridi} and \cite[Section 3.1]{Schenzel_Dual}). 
\end{remark}

\begin{remark} \label{in sequentially} Let $F$ be a finitely generated free $S$-module, and $U \subseteq F$ be a homogeneous submodule. It is clear from \cite{HerzogSbarra} that $F/\gin_{revlex}(U)$ is sequentially Cohen-Macaulay. However, the same argument works for $F/\IN_{revlex}(U)$ assuming that $x_n,x_{n-1},\ldots,x_1$ forms a filter regular sequence for $F/U$ (see also \cite[Key Example 2.15]{CDS}). In particular, under these assumptions, for all $r=0,\ldots,n$ the module $\Ext^{n-r}_S(F/\IN_{revlex}(U),S)$ is either zero or Cohen-Macaulay of dimension $r$.
\end{remark}

We are now ready to prove the conjecture of Lu and Yu \cite{LY}. 

\begin{theorem} \label{main} Let $S=k[x_1,\ldots,x_n]$, with the standard grading. Let $F$ be a finitely generated graded free $S$-module and $U \subseteq F$ be a homogeneous submodule. Assume that $x_n,x_{n-1},\ldots,x_1$ is a filter regular sequence for $F/U$. The following are equivalent:
\begin{enumerate}
\item $F/U$ is sequentially Cohen-Macaulay,
\item $\HF(\Ext^{n-r}_S(F/U,S)) = \HF(\Ext^{n-r}_S(F/\IN_{revlex}(U),S))$ for all $r=0,\ldots,n$,
\item $\adeg(F/U) = \adeg(F/\IN_{revlex}(U))$.
\end{enumerate}
\end{theorem}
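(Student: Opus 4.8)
The plan is to prove the cycle of implications $(1)\Rightarrow(2)\Rightarrow(3)\Rightarrow(1)$. The implication $(1)\Rightarrow(2)$ is the adaptation of the Herzog--Sbarra theorem to the filter regular setting: since $x_n,\dots,x_1$ is a filter regular sequence for $F/U$, the argument underlying Theorem~\ref{HS}, in the form recorded in Remark~\ref{in sequentially} and in \cite{CDS}, applies verbatim with $\IN_{revlex}(U)$ in place of $\gin_{revlex}(U)$, yielding the equality of Hilbert functions of the Ext modules when $F/U$ is sequentially Cohen--Macaulay. The implication $(2)\Rightarrow(3)$ is immediate, because equality of the Hilbert functions of $\Ext^{n-r}_S(F/U,S)$ and $\Ext^{n-r}_S(F/\IN_{revlex}(U),S)$ forces equality of their multiplicities $\e_r$, hence of $\adeg_r$, for every $r$, and summing gives $\adeg(F/U)=\adeg(F/\IN_{revlex}(U))$. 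All the real work is therefore in $(3)\Rightarrow(1)$, which is the conjecture of Lu and Yu, and the equivalence $(1)\Leftrightarrow(2)$ that emerges is the promised generalization of Theorem~\ref{HS}.

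For $(3)\Rightarrow(1)$ the first step is to upgrade the hypothesis to termwise equalities. By Remark~\ref{remark partial} one has $\adeg_r(F/U)\le \adeg_r(F/\IN_{revlex}(U))$ for each $r$, so equality of the sums forces $\e_r(\Ext^{n-r}_S(F/U,S))=\e_r(\Ext^{n-r}_S(F/\IN_{revlex}(U),S))$ for all $r=0,\dots,n$. Write $M=F/U$ and $M'=F/\IN_{revlex}(U)$. By Remark~\ref{in sequentially}, $M'$ is sequentially Cohen--Macaulay, so each $\Ext^{n-r}_S(M',S)$ is zero or Cohen--Macaulay of dimension $r$; by Peskine's characterization it then suffices to prove that each $\Ext^{n-r}_S(M,S)$ is likewise zero or Cohen--Macaulay of dimension $r$.

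I would argue by induction on $n$, reducing modulo $x_n$. Setting $\overline{S}=S/(x_n)=k[x_1,\dots,x_{n-1}]$ and $\overline{M}=M/x_nM$, I would first verify that $x_{n-1},\dots,x_1$ is a filter regular sequence for $\overline{M}$ and that, because $x_n$ is filter regular and the order is revlex, passing to revlex initial submodules commutes with this hyperplane section, so that $\overline{M'}$ is identified, up to finite length, with $\overline{F}/\IN_{revlex,\overline{S}}(\overline{U})$. This is the partial-revlex, filter regular analogue of the commutation of generic initial modules with hyperplane sections, and it is exactly the point where the hypothesis on $x_n,\dots,x_1$ is used. Granting it, $\overline{M}$ satisfies hypothesis $(3)$ over $\overline{S}$, so by induction $\overline{M}$ is sequentially Cohen--Macaulay and each $\Ext^{(n-1)-s}_{\overline{S}}(\overline{M},\overline{S})$ is zero or Cohen--Macaulay of dimension $s$, with multiplicity equal to that of the corresponding Ext module of $\overline{M'}$.

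Finally I would transfer Cohen--Macaulayness back to $M$, and this is where Lemma~\ref{multiplicity} does the work. Combining the change of rings isomorphism $\Ext^{j+1}_S(\overline{M},S)\cong \Ext^{j}_{\overline{S}}(\overline{M},\overline{S})(1)$ with the long exact sequence obtained by applying $\Hom_S(-,S)$ to $0\to M(-1)\xrightarrow{x_n} M\to \overline{M}\to 0$, whose only defect from exactness is governed by the finite length module $0:_Mx_n$, one relates $\Ext^{n-r}_S(M,S)$ modulo $x_n$ to $\Ext^{(n-1)-(r-1)}_{\overline{S}}(\overline{M},\overline{S})$. For $r=0$ the module $\Ext^{n}_S(M,S)$ has finite length and is automatically Cohen--Macaulay; for $r\ge 1$ one uses the inductive Cohen--Macaulayness of $\overline{M}$ together with the chain of multiplicity equalities for $M$, $M'$ and their reductions to check the two hypotheses of Lemma~\ref{multiplicity}, namely that the relevant hyperplane section of $\Ext^{n-r}_S(M,S)$ is Cohen--Macaulay of dimension $r-1$ and that its multiplicity equals $\e_r(\Ext^{n-r}_S(M,S))$. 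The lemma then forces $\Ext^{n-r}_S(M,S)$ to be Cohen--Macaulay of dimension $r$, completing the induction. The main obstacle I anticipate is precisely this last verification: since $x_n$ is only filter regular rather than regular, the identifications above hold only up to finite length corrections coming from $0:_Mx_n$ and from $0:x_n$ on the higher Ext modules, and one must show that these corrections neither destroy the dimension-$(r-1)$ Cohen--Macaulayness of the hyperplane section nor alter the top multiplicity, so that Lemma~\ref{multiplicity} may legitimately be applied.
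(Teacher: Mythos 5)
Your outline shares the paper's skeleton: termwise equalities via Remark \ref{remark partial}, induction through the hyperplane section by $x_n$, and Lemma \ref{multiplicity} to climb back up. But the two points you leave open are not routine verifications, they are the mathematical content, and the paper's solution to both rests on a device entirely absent from your proposal: the one-step partial revlex module $W=\IN_\omega(U)$, $\omega=(0,\ldots,0,-1)$. Concretely, the descent of hypothesis (3) to $\overline{M}=M/x_nM$ does \emph{not} follow from the commutation $\IN_{revlex}(U+x_nF)=V+x_nF$, where $V=\IN_{revlex}(U)$. That identity identifies $M'/x_nM'$ with $\overline{F}/\IN_{revlex}(\overline{U})$, but to conclude $\adeg(\overline{M})=\adeg(\overline{F}/\IN_{revlex}(\overline{U}))$ you must compare $\adeg(\overline{M})$ with $\adeg(M)$, and for a module not yet known to be sequentially Cohen-Macaulay the long exact sequence of $\Ext$ along $0\to M(-1)\xrightarrow{x_n}M\to\overline{M}\to 0$ has connecting maps you cannot control; semicontinuity over $\overline{S}$ gives only $\adeg(\overline{M})\le\adeg(\overline{F}/\IN_{revlex}(\overline{U}))$, and the reverse inequality is as hard as the theorem itself. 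The paper gets it because $W+x_nF=U+x_nF$ holds on the nose, and because by the proof of \cite[Proposition 3.5]{CDS} and \cite[Lemma 2.9]{CDS} multiplication by $x_n$ is \emph{injective} on $\Ext^{n-r}_S(F/W,S)$, giving short exact sequences that yield $\adeg_{r-1}(F/(U+x_nF))=\adeg_r(F/W)$; the squeeze $\adeg_r(F/U)\le\adeg_r(F/W)\le\adeg_r(F/V)$ of Remark \ref{remark partial}, with equal ends by hypothesis, then pins everything down and transfers (3) to the section. Nothing in your argument plays this role.

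Moreover, your claim that the errors are ``finite length corrections coming from $0:_Mx_n$ and from $0:x_n$ on the higher Ext modules'' is false, and even if it were true it would not suffice. Filter regularity of $x_n$ for $M$ controls, via Remark \ref{remark Ass}, only the associated primes of $\Ext^{j}_S(M,S)$ of maximal dimension $n-j$; the lower-dimensional associated primes of these Ext modules may contain $x_n$, so $0:_{\Ext^{j}_S(M,S)}x_n$ need not have finite length. Consequently $\Ext^{n-r}_S(M,S)\otimes_SS/(x_n)$ is a priori only a submodule of the Cohen-Macaulay module $\Ext^{n-r+1}_S(F/(U+x_nF),S)$, with quotient $0:_{\Ext^{n-r+1}_S(M,S)}x_n$; and a submodule of a Cohen-Macaulay module with nonzero (even finite length) quotient need not be Cohen-Macaulay, as $(x,y)\subseteq k[x,y]$ shows. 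So ``Cohen-Macaulay up to finite length corrections'' is not a notion under which the first hypothesis of Lemma \ref{multiplicity} can be checked: you need the correction to be exactly zero. The paper achieves this by first saturating, replacing $U$ by $U:x_n^\infty$ (which makes $x_n$ genuinely $M$-regular and kills $\Ext^n_S(M,S)$), and then running a secondary induction on $r$, which your proposal lacks: Lemma \ref{multiplicity} at step $r$ delivers not only Cohen-Macaulayness but also regularity of $x_n$ on $\Ext^{n-r}_S(M,S)$, which forces the next connecting map to vanish and gives the exact identification $\Ext^{n-r-1}_S(M,S)\otimes_SS/(x_n)\cong\Ext^{n-r}_S(F/(U+x_nF),S)$ needed at step $r+1$. (A lesser gap of the same nature: your (1)$\Rightarrow$(2) is asserted to hold ``verbatim'' by Theorem \ref{HS} and \cite{CDS}, but those results are stated for generic coordinates; the filter-regular version is part of what is being proved, and the paper establishes it by the same mod-$x_n$ induction, again via $\IN_{revlex}(U+x_nF)=V+x_nF$.)
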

\begin{proof}
We prove the equivalence of the three conditions by induction on $d=\dim(M)$. Let $V=\IN_{revlex}(U)$. First of all, observe that $U^{\sat} = U:x_n^\infty$, and that $\IN_{revlex}(U^{\sat}) = V:x_n^\infty = V^{\sat}$ by well-known properties of revlex-type orders (see \cite[15.7]{Eisenbud}). It follows that $\HF(U^{\sat}/U) = \HF(V^{\sat}/V)$. Since $U^{\sat}/U$ is the graded Matlis dual of $\Ext^n_S(F/U,S)$ (and similarly for $V$), we conclude that $\HF(\Ext^n_S(F/U,S)) = \HF(\Ext^n_S(F/V,S))$. In particular, $\adeg_0(F/U) = \e_0(\Ext^n_S(F/U,S)) = \e_0(\Ext^n_S(F/V,S)) =  \adeg_0(F/V)$.

Since modules of dimension zero are automatically sequentially Cohen-Macaulay, for $d=0$ the three statements are trivially equivalent because they are all true.

Now assume that $d>0$. Recall that $F/U$ is sequentially Cohen-Macaulay if and only if so is $F/U^{\sat}$ \cite[Corollary 2.8]{sCMSurvey}, and that $\Ext^{n-r}_S(F/U,S) \cong \Ext^{n-r}_S(F/U^{\sat},S)$ for all $r>0$. Similar considerations hold for $V$. In view of the previous equalities, we may assume that $\depth(F/U)>0$ after possibly replacing $U$ with $U^{\sat}$. 

Assume (1). By Remark \ref{remark Ass} we have that $\Ass_r(M)= \Ass_r(\Ext^{n-r}_S(F/U,S))$, and moreover $\Ass_r(\Ext^{n-r}_S(F/U,S)) = \Ass(\Ext^{n-r}_S(F/U,S))$ because $\Ext^{n-r}_S(F/U,S)$ is either zero or Cohen-Macaulay of dimension $r$. Since $x_n$ is filter regular for $F/U$ we conclude that it is regular for $\Ext^{n-r}_S(F/U,S)$ for all $r>0$. For all $r>0$ we have short exact sequences
\[
\xymatrix{
0 \ar[r] & \Ext^{n-r}_S(F/U,S) \ar[r]^-{\cdot x_n} & \Ext^{n-r}_S(F/U,S)[+1] \ar[r] & \Ext^{n-r+1}_S(F/(U+x_nF),S) \ar[r] & 0,
}
\]
which give that $F/(U+x_nF)$ is sequentially Cohen-Macaulay of dimension $d-1$. Moreover, for all $r>0$ and $j \in \ZZ$ we obtain
\begin{equation}
\label{HF}
\HF(\Ext^{n-r+1}_S(F/(U+x_nF),S);j)  = \HF(\Ext^{n-r}_S(F/U,S);j+1) - \HF(\Ext^{n-r}_S(F/U,S);j).
\end{equation}
Now let $\overline{S} = k[x_1,\ldots,x_{n-1}]$. We can identify $F/(U+x_nF)$ with a sequentially Cohen-Macaulay $\overline{S}$-module $\overline{F}/\overline{U}$, where $\overline{F}$ is  a graded free $\overline{S}$-module and $\overline{U} \subseteq \overline{F}$ is a homogeneous submodule. Since $\dim(\overline{F}/\overline{U}) = d-1$, by induction we have that $\HF\left(\Ext^{n-r}_{\overline{S}}(\overline{F}/\overline{U},\overline{S})\right) = \HF\left(\Ext^{n-r}_{\overline{S}}(\overline{F}/\IN_{revlex}(\overline{U}),\overline{S})\right)$ for all $r > 0$. By \cite[Lemma 3.1.16]{BrunsHerzog}, for all $r>0$ we have that 
\[
\Ext^{n-r}_{\overline{S}}(\overline{F}/\overline{U},\overline{S})\cong \Ext^{n-r+1}_S(F/(U+x_nF),S)
\]
and 
\[
\Ext^{n-r}_{\overline{S}}(\overline{F}/\IN_{revlex}(\overline{U}),\overline{S})\cong \Ext^{n-r+1}_S(F/(V+x_nF),S).
\]
Using that $\IN_{revlex}(U+x_nF) = V+x_nF$ (see \cite[15.7]{Eisenbud}), we obtain
\[
\HF(\Ext^{n-r+1}_S(F/(U+x_nF),S)) = \HF(\Ext^{n-r+1}_S(F/\IN_{revlex}(U+x_nF),S)) \text{ for all } r>0.
\]

By the proof of \cite[Proposition 3.5]{CDS} and by \cite[Lemma 2.9]{CDS}, for all $r>0$ we have short exact sequences
\begin{equation}
\label{ses revlex}
\xymatrix{
0 \ar[r] & \Ext^{n-r}_S(F/V,S) \ar[r]^-{\cdot x_n} & \Ext^{n-r}_S(F/V,S)[+1] \ar[r] & \Ext^{n-r+1}_S(F/V+x_nF,S) \ar[r] & 0,
}
\end{equation}
which give
\begin{equation}
\label{HF1}
\HF(\Ext^{n-r+1}_S(F/V+x_nF,S);j)  = \HF(\Ext^{n-r}_S(F/VS);j+1) - \HF(\Ext^{n-r}_S(F/V,S);j).
\end{equation}
Combining (\ref{HF}) and (\ref{HF1}), together with the fact that the $\Ext$ modules vanish for sufficiently negative degrees, we finally obtain
\[
\HF(\Ext^{n-r}_S(F/U,S)) = \HF(\Ext^{n-r}_S(F/V,S)) \text{ for all } r>0,
\]
and this concludes the proof that (1) $\Rightarrow$ (2). The fact that (2) $\Rightarrow$ (3) is trivial. 

We finally show that (3) $\Rightarrow$ (1). Let $\omega=(0,\ldots,0,-1) \in \ZZ^n$, and let $W=\IN_\omega(U)$. By Remark \ref{remark partial} and our assumptions we must have $\adeg_r(F/U)=\adeg_r(F/W) = \adeg_r(F/V)$ for all $r=0,\ldots,n$. Again using the proof of \cite[Proposition 3.5]{CDS} and \cite[Lemma 2.9]{CDS}, for all $r>0$ we obtain short exact sequences
\begin{equation}
\label{ses}
\xymatrix{
0 \ar[r] & \Ext^{n-r}_S(F/W,S) \ar[r]^-{\cdot x_n} & \Ext^{n-r}_S(F/W,S)[+1] \ar[r] & \Ext^{n-r+1}_S(F/(U+x_nF),S) \ar[r] & 0,
}
\end{equation}
where we use that $W+x_nF = U+x_nF$ holds because $\omega$ is a partial revlex order. The short exact sequences (\ref{ses}) and (\ref{ses revlex}) give that $\adeg_{r-1}(F/(U+x_nF)) = \adeg_r(F/W) = \adeg_r(F/V) = \adeg_{r-1}(F/(V+x_nF))$. 
We therefore obtain that $\adeg(F/(U+x_nF)) = \adeg(F/(V+x_nF))$. As above, we can identify $F/(U+x_nF)$ with an $\overline{S}$-module $\overline{F}/\overline{U}$. 
In this way, $F/(V+x_nF)$ can be identified with $\overline{F}/\IN_{revlex}(\overline{U})$, and therefore we have that $\adeg(\overline{F}/\overline{U}) = \adeg(\overline{F}/\IN_{revlex}(\overline{U}))$. Since $\dim(\overline{F}/\overline{U})=d-1$ and $x_{n-1},\ldots,x_1$ forms a filter regular sequence for $\overline{F}/\overline{U}$, by induction we have that $\overline{F}/\overline{U}$ is sequentially Cohen-Macaulay, and so is $F/(U+x_nF)$.

Now we note that if $\Ext^{n-r}_S(F/U,S) \ne 0$ then $\Ext^{n-r}_S(F/V,S) \ne 0$ by upper semi-continuity. Since we have $\e_r(\Ext^{n-r}_S(F/U,S)) = \adeg_r(F/U) = \adeg_r(F/V) = \e_r(\Ext^{n-r}_S(F/V,S))$, and the latter is positive by Remark \ref{in sequentially}, it follows that $\dim(\Ext^{n-r}_S(F/U,S))=r$. To complete the proof, we show by induction on $r>0$ that if $\Ext^{n-r}_S(F/U,S) \ne 0$ then it is a Cohen-Macaulay module, and that $x_n$ is a regular element for it.

First note that for $r>0$, since $\Ass_r(F/U) = \Ass_r(\Ext^{n-r}_S(F/U,S))$ by Remark \ref{remark Ass}, and because $x_n$ is filter regular for $F/U$, we have that $x_n$ avoids all minimal primes of $\Supp(\Ext^{n-r}_S(F/U,S))$ of dimension $r$. In particular, $\dim(\Ext^{n-r}_S(F/U,S) \otimes_S S/(x_n)) = r-1$.

For $r=1$ we have an exact sequence
\[
\xymatrix{
\Ext^{n-1}_S(F/U,S) \ar[r]^-{\cdot x_n} & \Ext^{n-1}_S(F/U,S)[+1] \ar[r] & \Ext^n(F/(U+x_nF),S) \ar[r] & 0,
}
\]
which gives that $\Ext^n_S(F/(U+x_nF),S) \cong \Ext^{n-1}_S(F/U,S) \otimes_S S/(x_n)$. Using the short exact sequence (\ref{ses}), and recalling that $U+x_nF = W + x_nF$, we obtain 
\begin{align*}
\e_1(\Ext^{n-1}_S(F/U,S)) & = \e_1(\Ext^{n-1}_S(F/W,S)) \\
& =  \e_0(\Ext^n_S(F/(U+x_nF),S)) = \e_0(\Ext^{n-1}_S(F/U,S) \otimes_S S/(x_n)).
\end{align*}
By Lemma \ref{multiplicity} we conclude that $\Ext^{n-1}_S(F/U,S)$ is Cohen-Macaulay, and $x_n$ is regular for it.

Now assume that $\Ext^{n-i}_S(F/U,S)$ is Cohen-Macaulay for all $i=1,\ldots,r-1$ and that $x_n$ is a regular element for all such modules. In particular, it is regular for $\Ext^{n-r+1}_S(F/U,S)$, and therefore we have an exact sequence
\[
\xymatrix{
\Ext^{n-r}_S(F/U,S) \ar[r]^-{\cdot x_n} & \Ext^{n-r}_S(F/U,S)[+1] \ar[r] & \Ext^{n-r+1}(F/(U+x_nF),S) \ar[r] & 0.
}
\]
We conclude that $\Ext^{n-r+1}_S(F/(U+x_nF),S) \cong \Ext^{n-r}(F/U,S) \otimes_S S/(x_n)$. As before, we have 
\begin{align*}
\e_r(\Ext^{n-r}_S(F/U,S)) &= \e_r(\Ext^{n-r}_S(F/W,S)) \\
 & = \e_{r-1}(\Ext^{n-r+1}_S(F/(U+x_nF),S)) \\
 & = \e_{r-1}(\Ext^{n-r}_S(F/U,S) \otimes_S S/(x_n)).
 \end{align*}
Finally, by Lemma \ref{multiplicity} we conclude that $\Ext^{n-r}_S(F/U,S)$ is Cohen-Macaulay and $x_n$ is regular for it, and the proof is complete.
\end{proof}

\bibliographystyle{abbrv}
\bibliography{References}
\end{document}